\documentclass[12pt,a4paper]{article}
\usepackage[utf8]{inputenc}
\usepackage[T1]{fontenc}
\usepackage{amsmath,amssymb,amsthm,mathtools}
\usepackage{tikz-cd}
\usepackage{hyperref}
\usepackage{geometry}
\usepackage{enumitem}
\usepackage{mathrsfs}

\theoremstyle{plain}
\newtheorem{theorem}{Theorem}[section]
\newtheorem{lemma}[theorem]{Lemma}

\newtheorem{corollary}[theorem]{Corollary}
\theoremstyle{definition}

\newtheorem{remark}[theorem]{Remark}

\DeclareMathOperator{\Coalg}{Coalg}

\DeclareMathOperator{\Qpot}{Q^{\diamond}}

\DeclareMathOperator{\id}{id}
\DeclareMathOperator{\Map}{Map}
\DeclareMathOperator{\Fun}{Fun}

\DeclareMathOperator{\Center}{Z}
\DeclareMathOperator{\Day}{\otimes_{\mathrm{Day}}}
\DeclareMathOperator{\Hom}{Hom}

\newcommand{\Htopos}{\mathbf{H}_{\mathbb{Q}}}

\newcommand{\CAlgfd}{\mathbf{C}^{*}\mathbf{Alg}_{\mathrm{fd}}}
\newcommand{\cCAlg}{\mathbf{cC^{*}Alg}_{\mathrm{fd}}}
\newcommand{\FinSet}{\mathbf{FinSet}}

\begin{document}

\title{\Large\bfseries The Degeneracy of the Centre Comonad Model\\ and the Precomposition Obstruction\\ for Quantum Modalities on Presheaf Topoi}
\author{Joey Woo}
\date{}

\maketitle

\begin{abstract}
The centre comonad model \cite{Woo2026} provided the first concrete cohesive linear \(\infty\)-topos, settling an open problem of Schreiber.  However, the model is degenerate: the quantum modality annihilates all non‑commutative algebras, and the associated linear logic collapses to classical cartesian logic.  In this paper we give a complete mathematical diagnosis of this degeneracy.  We prove that the centre comonad sends the representable sheaf of a simple non‑commutative algebra to the empty presheaf, and that the state space of any such algebra is empty.  We then identify the structural reason behind the collapse: the opposite of the classical core is monoidally equivalent to a cartesian monoidal category, which forces the Day convolution on the classical core to be cartesian and the Seely isomorphism to be trivially satisfied.  We prove a general theorem characterising when a coreflective modality on a presheaf topos -- under precise monoidal compatibility conditions -- suffers this degeneracy.
\end{abstract}

\section{Introduction}
\label{sec:intro}

Schreiber's cohesive homotopy type theory \cite{Schreiber13} axiomatises the interplay between smooth and discrete geometry within an \(\infty\)-topos.  A \emph{cohesive linear \(\infty\)-topos} enriches this structure with a quantum modality --- an idempotent product‑preserving comonad \(Q^{\diamond}\) satisfying Beck--Chevalley commutation with the cohesive modalities.  The first concrete model of these axioms was constructed in \cite{Woo2026} using the category of finite‑dimensional \(C^{*}\)-algebras and the centre comonad.

That model, however, is degenerate in two fundamental ways:

\begin{enumerate}[nosep]
\item For any simple non‑commutative \(C^{*}\)-algebra \(A\) (such as the qubit \(M_2(\mathbb{C})\)), the presheaf \(Q^{\diamond}\mathbf{y}_A\) is the empty presheaf; the modality completely erases the non‑commutative degrees of freedom.
\item The state space of any such algebra is also empty: \(\Hom(\mathbf{y}_{\mathbb{C}}, \mathbf{y}_A) = \varnothing\).
\item The Seely isomorphism \(Q^{\diamond}(X\times Y) \cong Q^{\diamond}X \Day Q^{\diamond}Y\) holds trivially, because the Day convolution on the classical core coincides with the cartesian product.  Hence the multiplicative conjunction of linear logic collapses to the additive conjunction, and the resource‑sensitive structure of linear logic is lost.
\end{enumerate}

The present paper gives a complete and rigorous mathematical diagnosis of these defects.  We prove the annihilation lemma (Section~\ref{sec:annihilation}) and the emptiness of the state space (Section~\ref{sec:states}).  In Section~\ref{sec:seely} we prove that the Day convolution on the classical core is cartesian.  The key observation is that the opposite of the category of commutative finite‑dimensional \(C^{*}\)-algebras is equivalent, via Gelfand duality, to the category of finite sets, which is cartesian monoidal.  We prove a general lemma that under such a duality the Day convolution on the presheaf topos of the original category is cartesian.  Applied to the centre model, this shows that the Seely isomorphism is trivially satisfied, collapsing the linear logic.

In Section~\ref{sec:obstruction} we isolate the structural reason behind this collapse by proving a general theorem: if a coreflective quantum modality on a presheaf topos has a classical core whose opposite is monoidally equivalent to a cartesian monoidal category, then the Day convolution on the classical core is cartesian, and the Seely isomorphism degenerates.  This result unifies the centre comonad with other natural candidates and shows that a genuine non‑degenerate quantum modality must avoid this configuration.

The paper concludes with a brief discussion of escape routes and a list of desiderata for a non‑degenerate modality (Section~\ref{sec:beyond}).

\section{Preliminaries}
\label{sec:prelim}

We work in the setting of \(\infty\)-categories as developed in \cite{Lurie2009, LurieHA}.  An \(\infty\)-topos is an accessible left exact localisation of a presheaf \(\infty\)-category. A \emph{cohesive \(\infty\)-topos} \cite[Definition~3.4.1]{Schreiber13} is an \(\infty\)-topos \(\mathbf{H}\) equipped with an adjoint triple of endofunctors \(\Pi \dashv \flat \dashv \sharp\) such that \(\Pi\) preserves finite products, \(\flat\) is fully faithful, and the unit \(\id \to \flat\Pi\) is an equivalence on the essential image of \(\flat\).

We fix a smooth cohesive \(\infty\)-topos \(\mathbf{H}_{\mathrm{sm}}\) (for instance, sheaves of spaces on the site of smooth manifolds); only its existence as a cartesian closed presentable \(\infty\)-category is needed.

Day convolution gives a symmetric monoidal closed structure on the functor \(\infty\)-category \(\Fun(\mathcal{A},\mathcal{V})\) when \(\mathcal{A}\) is a small symmetric monoidal \(\infty\)-category and \(\mathcal{V}\) is a symmetric monoidal closed presentable \(\infty\)-category \cite[Section~2.2.6]{LurieHA}.  For \(F,G:\mathcal{A}\to\mathcal{V}\),
\[
(F \Day G)(a) = \int^{x,y\in\mathcal{A}} \Map_{\mathcal{A}}(x\otimes y,a) \times F(x)\otimes_{\mathcal{V}} G(y),
\]
with monoidal unit \(\mathbf{1}_{\Day}(a) = \Map_{\mathcal{A}}(\mathbf{1}_{\mathcal{A}},a) \times \mathbf{1}_{\mathcal{V}}\).

\subsection{Finite‑dimensional \(C^{*}\)-algebras and the centre comonad}
Let \(\CAlgfd\) be the category of finite‑dimensional unital \(C^{*}\)-algebras and \emph{centre‑preserving} unital \(*\)-homomorphisms: a morphism \(f:A\to B\) satisfies \(f(Z(A))\subseteq Z(B)\).  The direct sum \(A\oplus B\) (component‑wise operations) is the categorical product, and the minimal tensor product \(\otimes\) makes \(\CAlgfd\) a symmetric monoidal category with unit \(\mathbb{C}\).

The \emph{centre} functor is
\[
\Center : \CAlgfd \to \CAlgfd,\qquad \Center(A) = \{a\in A \mid ab=ba\ \forall b\in A\},
\]
with \(\Center(f)\) being the restriction of a centre‑preserving \(f\).  The inclusion \(\varepsilon_A : \Center(A) \hookrightarrow A\) is natural, and \(\Center(\Center(A)) = \Center(A)\); thus \(\Center\) is an idempotent comonad.  It is a right adjoint to the inclusion of commutative algebras, hence left exact, structurally restricting the site to its commutative core.

The structure theorem for finite‑dimensional \(C^{*}\)-algebras (see e.g.\ \cite[Theorem I.11.2]{Takesaki}) states that every object of \(\CAlgfd\) is isomorphic to a finite direct sum of matrix algebras \(M_n(\mathbb{C})\).  The centre of \(M_n(\mathbb{C})\) is \(\mathbb{C}\) (scalar matrices).

We denote by \(\mathbf{y}_A\) the representable presheaf
\[
\mathbf{y}_A(B) = \Hom_{\CAlgfd}(A,B)\cdot *_{\mathbf{H}_{\mathrm{sm}}},
\]
where \(*_{\mathbf{H}_{\mathrm{sm}}}\) is the terminal object and the dot denotes copower.  The Yoneda embedding is contravariant: a morphism \(f:A\to B\) induces \(\mathbf{y}_f : \mathbf{y}_B \to \mathbf{y}_A\), and
\[
\Hom_{\Htopos}(\mathbf{y}_A,\mathbf{y}_B) \cong \Hom_{\CAlgfd}(B,A).
\tag{1}
\]

\section{Diagnosis of the centre comonad degeneracy}
\label{sec:degeneracy}

In the model of \cite{Woo2026}, the quantum modality is \(Q^{\diamond} = \Center^{*}\), i.e.\ precomposition with the centre functor.  We now give a complete analysis of its defects.

\subsection{Annihilation of non‑commutative simple algebras}
\label{sec:annihilation}

\begin{lemma}[Annihilation]\label{lem:annihilation}
Let \(A\in\CAlgfd\) be a simple non‑commutative algebra (i.e., \(A \cong M_n(\mathbb{C})\) with \(n>1\)).  Then for every \(B\in\CAlgfd\),
\[
\Qpot(\mathbf{y}_A)(B) = \varnothing,
\]
i.e., \(\Qpot(\mathbf{y}_A)\) is the empty presheaf (the initial object of \(\Htopos\)).
\end{lemma}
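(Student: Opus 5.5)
The plan is to compute \(\Qpot(\mathbf{y}_A)\) objectwise straight from the definitions. Since \(\Qpot = \Center^{*}\) is precomposition with the centre functor, for every \(B\in\CAlgfd\) we have
\[
\Qpot(\mathbf{y}_A)(B) = \mathbf{y}_A(\Center(B)) = \Hom_{\CAlgfd}(A,\Center(B))\cdot *_{\mathbf{H}_{\mathrm{sm}}}.
\]
The copower of the terminal object by the empty set is the initial object of \(\mathbf{H}_{\mathrm{sm}}\). Colimits in \(\Htopos\) are computed objectwise, so the empty presheaf is the initial object of \(\Htopos\). The lemma therefore reduces to a purely algebraic claim: for every \(B\), there is no centre-preserving unital \(*\)-homomorphism \(M_n(\mathbb{C})\to \Center(B)\) when \(n>1\).

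For this claim, suppose \(f : M_n(\mathbb{C})\to \Center(B)\) is such a map. It is in particular a ring homomorphism, so \(\ker f\) is a two-sided ideal of \(M_n(\mathbb{C})\). Because \(M_n(\mathbb{C})\) is simple, \(\ker f\) is either \(0\) or all of \(M_n(\mathbb{C})\).
\begin{itemize}
\item If \(\ker f = 0\), then \(f\) embeds \(M_n(\mathbb{C})\) into the commutative algebra \(\Center(B)\). This forces \(M_n(\mathbb{C})\) to be commutative, which fails for \(n>1\), since matrix units satisfy \(e_{12}e_{21}\neq e_{21}e_{12}\).
\item If \(\ker f = M_n(\mathbb{C})\), then \(f(1)=0\). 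Unitality gives \(f(1)=1_{\Center(B)}=1_B\), so \(1_B=0\).
\end{itemize}
The centre-preserving condition on morphisms plays no role in this argument; we only need that morphisms are unital \(*\)-homomorphisms.

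The main obstacle is the second case: it rules out \(f\) only if \(B\) is not the zero algebra. If \(B=0\), then \(\Center(B)=0\), and there is exactly one map \(M_n(\mathbb{C})\to 0\), so \(\Qpot(\mathbf{y}_A)(0)\) would be a point rather than empty. I would handle this by making explicit the convention that objects of \(\CAlgfd\) are nonzero, i.e.\ \(1\neq 0\). This is implicit in the structure theorem as stated, where every object is a finite direct sum of matrix algebras; the empty sum must be excluded, or else \(B=0\) must be treated as an exception. Under this convention the second case is impossible, so \(\Hom(A,\Center(B))=\varnothing\) for all \(B\), and the lemma follows.
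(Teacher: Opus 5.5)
Your proof follows the paper's: you evaluate objectwise to get $\Hom_{\CAlgfd}(A,\Center(B))$, then use simplicity of $A$ to split into the injective case (impossible, since $\Center(B)$ is commutative) and the zero-map case (impossible by unitality). Your caveat about $B=0$ is a genuine correction that the paper's proof lacks. The paper asserts that the zero map ``is not unital'', which fails when $\Center(B)=0$, and the paper elsewhere uses $\cCAlg^{\mathrm{op}}\simeq\FinSet$, where $\varnothing$ corresponds to the zero algebra. So the convention excluding $0$ that you propose is needed for the lemma to hold as stated.
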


\begin{proof}
By definition,
\[
\Qpot(\mathbf{y}_A)(B) = \mathbf{y}_A(\Center(B)) = \Hom_{\CAlgfd}(A,\Center(B)).
\]
The algebra \(\Center(B)\) is commutative.  Suppose there exists a unital \(*\)-homomorphism \(\phi : A \to \Center(B)\).  Since \(A\) is simple, \(\ker\phi\) is a closed two‑sided ideal; by simplicity, it is either \(\{0\}\) or \(A\).  If \(\ker\phi=\{0\}\), then \(\phi\) is injective, implying that \(A\) is a subalgebra of a commutative algebra, hence commutative, contradiction.  If \(\ker\phi = A\), then \(\phi\) is the zero map, which is not unital.  Therefore no unital \(*\)-homomorphism can exist.  Thus the hom‑set is empty for all \(B\). \qedhere
\end{proof}

\begin{remark}
The qubit algebra \(M_2(\mathbb{C})\) is a simple non‑commutative algebra, so \(\Qpot(\mathbf{y}_{M_2})\) is the empty presheaf.  More generally, any algebra that contains a non‑commutative simple direct summand is annihilated by the centre modality; only purely commutative algebras survive.
\end{remark}

\subsection{Empty state space for non‑commutative systems}
\label{sec:states}

The unit of the Day convolution is \(\mathbf{y}_{\mathbb{C}}\).  A \emph{state} of a quantum system \(\mathbf{y}_A\) is a morphism \(\mathbf{y}_{\mathbb{C}} \to \mathbf{y}_A\).

\begin{lemma}[Empty state space]\label{lem:empty-states}
Let \(A\) be a simple non‑commutative finite‑dimensional \(C^{*}\)-algebra.  Then
\[
\Hom_{\Htopos}(\mathbf{y}_{\mathbb{C}}, \mathbf{y}_A) = \varnothing .
\]
\end{lemma}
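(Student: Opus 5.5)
The plan is to reduce the statement to a question about algebra homomorphisms via the Yoneda isomorphism (1). Taking the two objects to be \(\mathbb{C}\) and \(A\), (1) gives
\[
\Hom_{\Htopos}(\mathbf{y}_{\mathbb{C}}, \mathbf{y}_A) \cong \Hom_{\CAlgfd}(A, \mathbb{C}).
\]
So it suffices to show that there is no centre‑preserving unital \(*\)-homomorphism \(\phi : A \to \mathbb{C}\). I will note explicitly that the contravariance of the Yoneda embedding is what reverses the direction. This matters, because a morphism \(\mathbb{C} \to A\) (the unit inclusion) certainly exists.

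For the key step I will reuse the argument of Lemma~\ref{lem:annihilation}, with the commutative algebra \(\Center(B)\) replaced by \(\mathbb{C}\). Suppose \(\phi : A \to \mathbb{C}\) is a unital \(*\)-homomorphism. Its kernel is a two‑sided ideal of \(A\), and \(A\) is simple, so \(\ker\phi \in \{0, A\}\).

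\begin{itemize}
\item If \(\ker\phi = A\), then \(\phi = 0\), which contradicts \(\phi(1) = 1\).
\item If \(\ker\phi = 0\), then \(\phi\) embeds \(A\) into the commutative algebra \(\mathbb{C}\). Then \(A\) would be commutative, contradicting non‑commutativity.
\end{itemize}

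A dimension count works equally well here: \(\dim A = n^2 > 1 = \dim \mathbb{C}\), so \(\phi\) cannot be injective. The centre‑preserving condition plays no role; emptiness already holds at the level of unital \(*\)-homomorphisms. Alternatively, one could observe that \(\mathbb{C}\) is commutative, hence \(\mathbb{C} = \Center(\mathbb{C})\), and read the claim off Lemma~\ref{lem:annihilation} with \(B = \mathbb{C}\):
\[
\Qpot(\mathbf{y}_A)(\mathbb{C}) = \Hom_{\CAlgfd}(A, \Center(\mathbb{C})) = \Hom_{\CAlgfd}(A, \mathbb{C}) = \varnothing.
\]

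I expect no real obstacle. The only delicate point is justifying the use of (1) in the \(\infty\)-categorical setting, where \(\mathbf{y}_A\) is a copower of the terminal object of \(\mathbf{H}_{\mathrm{sm}}\) rather than a set‑valued representable. There I would apply the enriched Yoneda lemma:
\[
\Map(\mathbf{y}_{\mathbb{C}}, \mathbf{y}_A) \simeq \mathbf{y}_A(\mathbb{C}) = \Hom_{\CAlgfd}(A, \mathbb{C}) \cdot *.
\]
A copower of \(*\) indexed by the empty set is the initial object, so its global points form an empty space. I will simply cite (1) as stated in the paper.
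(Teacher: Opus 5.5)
Your proposal is correct and follows the paper's proof essentially verbatim: use the Yoneda isomorphism (1) to identify $\Hom(\mathbf{y}_{\mathbb{C}},\mathbf{y}_A)$ with $\Hom_{\CAlgfd}(A,\mathbb{C})$, then rerun the kernel/simplicity argument of Lemma~\ref{lem:annihilation} with $\mathbb{C}$ as the commutative target. Your extra remarks are sound refinements rather than a different route: the dimension count, the reading at $B=\mathbb{C}$ via $\Center(\mathbb{C})=\mathbb{C}$, and the empty-copower check (which tacitly uses that $\mathbf{H}_{\mathrm{sm}}$ is non-degenerate, i.e.\ $*\not\simeq\varnothing$).
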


\begin{proof}
By the Yoneda lemma (1),
\[
\Hom_{\Htopos}(\mathbf{y}_{\mathbb{C}}, \mathbf{y}_A) \;\cong\; \Hom_{\CAlgfd}(A, \mathbb{C}).
\]
The target \(\mathbb{C}\) is commutative.  The same argument as in Lemma~\ref{lem:annihilation} shows that there is no unital \(*\)-homomorphism from a simple non‑commutative algebra into a commutative algebra.  Hence the hom‑set is empty. \qedhere
\end{proof}

Thus the centre modality not only annihilates the quantum system itself, but also renders the system devoid of any states.  A physical model of a qubit must have a non‑trivial state space (the Bloch sphere); the centre modality fails even to provide a single point.

\subsection{Cartesian Day convolution on the classical core}
\label{sec:seely}

The classical core of the centre model is the category of \(Q^{\diamond}\)-coalgebras, which is equivalent to \(\Fun(\cCAlg,\mathbf{H}_{\mathrm{sm}})\), the presheaf topos on commutative algebras.  The Day convolution on this category is induced by the tensor product of commutative algebras.

\begin{lemma}[Cartesian Day convolution on the classical core]\label{lem:cartesian}
Let \(\cCAlg\) be the category of finite‑dimensional commutative \(C^{*}\)-algebras, equipped with the tensor product.  Then the Day convolution on \(\Fun(\cCAlg,\mathbf{H}_{\mathrm{sm}})\) coincides with the cartesian product.
\end{lemma}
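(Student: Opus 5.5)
The approach is to reduce the statement to a formal fact about Day convolution: when the monoidal structure on the source category is \emph{cocartesian} and we consider \emph{covariant} functors, Day convolution is the pointwise (cartesian) product. The work then splits into two parts. First, show that \(\otimes\) on \(\cCAlg\) is the categorical coproduct, with unit \(\mathbb{C}\) initial. Second, evaluate the Day coend using the co-Yoneda lemma.

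For the first part I will use Gelfand duality. Every object of \(\cCAlg\) is isomorphic to \(\mathbb{C}^X = C(X)\) for a finite set \(X\), by the structure theorem quoted in the preliminaries with all blocks \(1\times 1\). Unital \(*\)-homomorphisms \(\mathbb{C}^X\to\mathbb{C}^Z\) correspond to functions \(Z\to X\). The centre-preservation condition on morphisms is automatic, because the centre of a commutative algebra is the whole algebra. This gives an equivalence \(\cCAlg^{\mathrm{op}}\simeq\FinSet\), under which \(\mathbb{C}^X\otimes\mathbb{C}^Y\cong\mathbb{C}^{X\times Y}\) and \(\mathbb{C}\cong\mathbb{C}^{\{*\}}\). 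Since \(X\times Y\) is the product in \(\FinSet\) and \(\{*\}\) is terminal, \(\otimes\) is the coproduct in \(\cCAlg\) and \(\mathbb{C}\) is initial. Concretely this yields natural bijections
\[
\Hom(A\otimes B, C)\cong \Hom(A,C)\times\Hom(B,C), \qquad \Hom(\mathbb{C},C)\cong *.
\]
I will also check directly that the symmetry and associativity isomorphisms of \(\otimes\) match those of the coproduct. This holds because both are transported from \(\times\) in \(\FinSet\).

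For the second part, I substitute into the coend formula from the preliminaries. For \(F,G:\cCAlg\to\mathbf{H}_{\mathrm{sm}}\), whose monoidal structure is cartesian, we get
\[
(F\Day G)(a)\simeq\int^{x,y}\Map(x,a)\times\Map(y,a)\times F(x)\times G(y).
\]
The integrand factors as a product of an \(x\)-part and a \(y\)-part. Because \(\mathbf{H}_{\mathrm{sm}}\) is cartesian closed, \(-\times-\) preserves colimits in each variable, so the coend splits as
\[
\Bigl(\int^x \Map(x,a)\times F(x)\Bigr)\times\Bigl(\int^y\Map(y,a)\times G(y)\Bigr).
\]
By the co-Yoneda (density) lemma for covariant functors, each factor is \(F(a)\), respectively \(G(a)\), so \((F\Day G)(a)\simeq F(a)\times G(a)\). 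For the unit, \(\mathbf{1}_{\Day}(a)=\Map(\mathbb{C},a)\times *\simeq *\), which is the terminal presheaf. Since the hom-spaces here are discrete sets, the \(\infty\)-categorical co-Yoneda applies without subtlety.

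The main obstacle is upgrading these objectwise equivalences to an identification of symmetric monoidal structures, and not merely of the underlying bifunctors. I would first try to avoid coherence bookkeeping by invoking the general fact that Day convolution on \(\Fun(\mathcal{A},\mathcal{V})\), for \(\mathcal{A}\) cocartesian monoidal and \(\mathcal{V}\) cartesian, is the pointwise cartesian structure; this follows from \cite[Section~2.2.6]{LurieHA}. Failing that, I would argue as follows. The unit is terminal and \(F\Day G\) comes equipped with natural projections to \(F\) and \(G\), induced by the maps \(\mathbf{1}_{\Day}\to *\). These projections exhibit \(F\Day G\) as the product by the computation above. A symmetric monoidal structure whose unit is terminal and whose tensor is canonically the product is the cartesian one, up to a contractible choice.
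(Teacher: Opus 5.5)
Your argument is correct. It rests on the same key fact as the paper: Gelfand duality turns $\otimes$ on $\cCAlg$ into the product of finite sets, so $\otimes$ is the coproduct in $\cCAlg$ and $\mathbb{C}$ is initial. Where you differ is in how that fact is fed into Day convolution. The paper transports to $\FinSet^{\mathrm{op}}$, checks $\mathbf{y}_X\Day\mathbf{y}_Y\cong\mathbf{y}_{X\times Y}\cong\mathbf{y}_X\times\mathbf{y}_Y$ on representables, and then extends to all functors by density together with separate cocontinuity of both bifunctors. You instead substitute $\Map(x\otimes y,a)\simeq\Map(x,a)\times\Map(y,a)$ directly into the coend. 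You split it using that products in $\mathbf{H}_{\mathrm{sm}}$ preserve colimits in each variable, and collapse each factor by co-Yoneda. This buys you three things. First, the computation applies to arbitrary $\mathbf{H}_{\mathrm{sm}}$-valued $F,G$ at once, so no density step is needed. The paper's density step, read literally with generators $\mathbf{y}_X\cdot *$ only, misses functors with non-discrete values; it needs generators $\mathbf{y}_X\cdot h$ with $h\in\mathbf{H}_{\mathrm{sm}}$, which is an easy fix. Second, you handle the unit, $\mathbf{1}_{\Day}\simeq *$, which the paper does not mention. Third, you address the identification at the level of symmetric monoidal structures, whereas the paper only produces an isomorphism of bifunctors. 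Your fallback for that coherence step is the right one. A terminal unit together with projections $F\Day G\to F\Day\mathbf{1}_{\Day}\simeq F$ exhibiting a product is precisely Lurie's definition of a cartesian symmetric monoidal $\infty$-category, and such structures are essentially unique (Higher Algebra, \S2.4). The paper's version is shorter and is phrased to carry over to hypothesis (iv) of Theorem~\ref{thm:obstruction}. Yours carries over just as easily, since (iv) says exactly that the monoidal structure on $\mathcal{D}$ is cocartesian.
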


\begin{proof}
Gelfand duality gives an equivalence \(\cCAlg^{\mathrm{op}} \simeq \FinSet\), where \(\FinSet\) is the category of finite sets.  Under this equivalence, the tensor product of commutative algebras corresponds to the cartesian product of finite sets.  Hence there is a monoidal equivalence \(\cCAlg \simeq \FinSet^{\mathrm{op}}\), where the monoidal structure on \(\FinSet^{\mathrm{op}}\) is given by the product of sets.

For any small symmetric monoidal category \(\mathcal{M}\), the Day convolution on the functor category \(\Fun(\mathcal{M},\mathbf{H}_{\mathrm{sm}})\) depends only on the monoidal structure up to monoidal equivalence.  In particular, for \(\mathcal{M} = \FinSet^{\mathrm{op}}\) with the monoidal structure given by the product of sets, the Day convolution is cartesian.  Indeed, for representables \(\mathbf{y}_X,\mathbf{y}_Y\) (where \(\mathbf{y}_X(Z) = \Hom_{\FinSet^{\mathrm{op}}}(X,Z) = \Hom_{\FinSet}(Z,X)\)), we have \(\mathbf{y}_X \Day \mathbf{y}_Y = \mathbf{y}_{X \times Y}\), while the pointwise cartesian product satisfies
\[
(\mathbf{y}_X \times \mathbf{y}_Y)(Z) = \Hom_{\FinSet}(Z,X) \times \Hom_{\FinSet}(Z,Y) \cong \Hom_{\FinSet}(Z, X\times Y)
\]
by the universal property of the product in \(\FinSet\).  Hence \(\mathbf{y}_X \Day \mathbf{y}_Y \cong \mathbf{y}_X \times \mathbf{y}_Y\).  The cartesian product $(-)\times Y$ is a left adjoint (the topos is cartesian closed) and therefore preserves colimits in each variable; the Day convolution is a coend, hence also separately cocontinuous.
Every presheaf is a colimit of representables (the co‑Yoneda lemma, \cite[Lemma~5.1.5.3]{Lurie2009}),
so an isomorphism on representables that is natural and colimit‑preserving in each argument extends uniquely to an isomorphism on all presheaves.
Thus $\mathbf{y}_X \Day \mathbf{y}_Y \cong \mathbf{y}_X \times \mathbf{y}_Y$ for all presheaves. Transporting back along the monoidal equivalence \(\cCAlg \simeq \FinSet^{\mathrm{op}}\) completes the proof. \qedhere
\end{proof}

\begin{corollary}[Triviality of the Seely isomorphism]\label{cor:seely-trivial}
For the centre comonad \(Q^{\diamond}\), the Seely isomorphism
\[
Q^{\diamond}(X\times Y) \;\cong\; Q^{\diamond}X \Day Q^{\diamond}Y
\]
holds trivially, merely identifying the cartesian product with itself.  Consequently, the multiplicative conjunction of linear logic collapses to the additive conjunction, and the linear‑logic structure is degenerate.
\end{corollary}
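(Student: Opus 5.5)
The corollary should follow from Lemma~\ref{lem:cartesian}, applied after placing both sides of the Seely map inside the classical core. First I will check that \(Q^{\diamond}=\Center^{*}\) lands in the classical core. Since \(\Center\) is idempotent with image \(\cCAlg\), precomposition \(\Center^{*}F = F\circ\Center\) factors through the restriction of \(F\) to \(\cCAlg\). Under the equivalence \(\Coalg(Q^{\diamond})\simeq\Fun(\cCAlg,\mathbf{H}_{\mathrm{sm}})\), the coalgebra \(Q^{\diamond}X\) corresponds to \(X|_{\cCAlg}\). So for arbitrary \(X,Y\), both \(Q^{\diamond}X\) and \(Q^{\diamond}Y\) are objects of the classical core, and the Day convolution on the right of the Seely map is the one induced by the tensor product on \(\cCAlg\).

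Next I will compute the two sides. Limits in a presheaf category are computed pointwise, so \(Q^{\diamond}(X\times Y)(B)=X(\Center B)\times Y(\Center B)=(Q^{\diamond}X\times Q^{\diamond}Y)(B)\). Hence \(Q^{\diamond}\) preserves binary products, and the left-hand side is canonically \(Q^{\diamond}X\times Q^{\diamond}Y\). By Lemma~\ref{lem:cartesian}, the right-hand side \(Q^{\diamond}X\Day Q^{\diamond}Y\) is also naturally isomorphic to \(Q^{\diamond}X\times Q^{\diamond}Y\). Composing the two identifications gives the Seely isomorphism. I will note that under Lemma~\ref{lem:cartesian} it is literally the identity of \(Q^{\diamond}X\times Q^{\diamond}Y\), which is the precise sense of ``holds trivially''.

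The main obstacle is ensuring that the Seely map built from the comonad structure agrees with this composite. Agreeing on objects is not enough; the maps themselves have to match. My first attempt is to compare the two maps on representables \(\mathbf{y}_X,\mathbf{y}_Y\) with \(X,Y\) commutative, viewed as finite sets under Gelfand duality. There every map is determined by Yoneda. Both the Seely map and the identification of Lemma~\ref{lem:cartesian} send the identity element to the canonical element of \(\mathbf{y}_{X\otimes Y}\) that corresponds to the product projections. I will then extend from representables to all presheaves by cocontinuity in each variable, exactly as in the proof of Lemma~\ref{lem:cartesian}.

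Finally, for the logical consequence, I will read the multiplicative conjunction \(\otimes\) on coalgebras as \(\Day\) and the additive conjunction \(\&\) as \(\times\). The natural isomorphism \(\Day\cong\times\) then identifies the two connectives, and it also identifies their units \(\mathbf{y}_{\mathbb{C}}\) and the terminal object, since \(\mathbb{C}\) is terminal in \(\FinSet^{\mathrm{op}}\). Consequently contraction and weakening become derivable, and the linear structure is degenerate.
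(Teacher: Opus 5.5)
Your argument is the paper's: $Q^{\diamond}$ preserves binary products because limits of presheaves are computed pointwise, and Lemma~\ref{lem:cartesian} identifies the Day convolution on the classical core with the cartesian product, so the Seely isomorphism is the composite of these two identifications (the paper states this for coalgebras, which is where your observation $Q^{\diamond}X\leftrightarrow X|_{\cCAlg}$ puts you anyway). Your extra check that the canonical Seely map agrees with this composite goes beyond what the paper proves, and it contains one small slip: $\mathbb{C}$ corresponds to the one-point set, which is terminal in $\FinSet$ and hence \emph{initial} in $\FinSet^{\mathrm{op}}\simeq\cCAlg$, not terminal --- and that initiality is exactly why $\mathbf{y}_{\mathbb{C}}=\Hom(\mathbb{C},-)$ is the terminal presheaf.
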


\begin{proof}
By the fundamental architecture of presheaf categories, limits are computed pointwise.  Therefore, the precomposition comonad \(Q^{\diamond}\) automatically preserves finite products for free, completely independent of the algebraic properties of the underlying site functor.  For any presheaves \(X\) and \(Y\) evaluated at an algebra \(B\), the product evaluates on the outside as \(X(Z(B)) \times Y(Z(B))\).  Thus, for any coalgebras \(X\) and \(Y\) (where \(Q^{\diamond}X \cong X\) and \(Q^{\diamond}Y \cong Y\)), the left‑hand side \(Q^{\diamond}(X\times Y)\) is canonically isomorphic to \(X \times Y\).  The right‑hand side is \(X \Day Y\).  By Lemma~\ref{lem:cartesian}, the Day convolution on the classical core coincides with the cartesian product, so \(X \Day Y \cong X \times Y\).  Therefore, the Seely isomorphism holds trivially, merely identifying the cartesian product with itself rather than providing a resource‑sensitive linear logic. \qedhere
\end{proof}

\subsection{The general obstruction}
\label{sec:obstruction}

In the centre model, the following monoidal compatibilities hold:
\begin{itemize}[nosep]
\item $\cCAlg\subseteq\CAlgfd$ is a monoidal subcategory (the tensor product of commutative algebras is commutative).
\item The inclusion $i:\cCAlg\hookrightarrow\CAlgfd$ is strong monoidal.
\item The coreflector $Z:\CAlgfd\to\cCAlg$ is strong monoidal.
\item The adjunction $i \dashv Z$ is a monoidal adjunction: the unit $\id_{\cCAlg} \Rightarrow Z \circ i$ is the identity (since $Z \circ i \cong \id_{\cCAlg}$), and the counit $i \circ Z \Rightarrow \id_{\CAlgfd}$ is the inclusion of the centre, both of which are monoidal natural transformations (the centre inclusion respects tensor products by Lemma~3.1 of \cite{Woo2026}).
\item The induced comonad $c=i\circ Z:\CAlgfd\to\CAlgfd$ is therefore a symmetric monoidal comonad, hence $Q^{\diamond}=c^{*}$ is a symmetric monoidal comonad on the Day convolution category $(\Fun(\CAlgfd,\mathbf{H}_{\mathrm{sm}}),\Day)$.
\item The equivalence $\Coalg(Q^{\diamond})\simeq\Fun(\cCAlg,\mathbf{H}_{\mathrm{sm}})$ is monoidal with respect to the coalgebra tensor product and the Day convolution on $\Fun(\cCAlg,\mathbf{H}_{\mathrm{sm}})$.
\end{itemize}
The degeneracy of the centre model follows from these facts together with the cartesian nature of $\cCAlg^{\mathrm{op}}$.
We now extract the essential structure that makes this argument work, yielding a general obstruction theorem for a large class of precomposition comonads.

\begin{theorem}[Precomposition obstruction -- adamantine version]\label{thm:obstruction}
Let $\mathcal{C}$ be a small symmetric monoidal $\infty$-category,
$\mathcal{D}\subseteq\mathcal{C}$ a coreflective subcategory with inclusion
$i:\mathcal{D}\hookrightarrow\mathcal{C}$ and coreflector $r:\mathcal{C}\to\mathcal{D}$
(so $i\dashv r$).  Assume:
\begin{enumerate}[nosep,label=(\roman*)]
\item $i$ is strong monoidal; equivalently, $\mathcal{D}$ is a monoidal subcategory of $\mathcal{C}$ under the restricted tensor product.
\item $r$ is strong monoidal.
\item The adjunction $i \dashv r$ is a monoidal adjunction: the unit $\eta : \id_{\mathcal{D}} \Rightarrow r \circ i$ and counit $\epsilon : i \circ r \Rightarrow \id_{\mathcal{C}}$ are monoidal natural transformations.
\item $\mathcal{D}^{\mathrm{op}}$ is monoidally equivalent to a cartesian monoidal category.
\end{enumerate}
Let $c = i\circ r : \mathcal{C}\to\mathcal{C}$ be the induced comonad, and let
$Q = c^{*}$ be the precomposition comonad on the presheaf $\infty$-topos
$\Fun(\mathcal{C},\mathbf{H}_{\mathrm{sm}})$ equipped with Day convolution $\Day$.
Then:
\begin{enumerate}[nosep,label=(\arabic*)]
\item $Q$ is an idempotent symmetric monoidal comonad with respect to $\Day$, and the category of $Q$-coalgebras $\Coalg(Q)$ inherits a symmetric monoidal structure $\Day_Q$ (the restriction of $\Day$).
\item The comparison functor $\Coalg(Q)\to\Fun(\mathcal{D},\mathbf{H}_{\mathrm{sm}})$ induced by restriction along $i$ is a symmetric monoidal equivalence, where $\Fun(\mathcal{D},\mathbf{H}_{\mathrm{sm}})$ carries the Day convolution from the monoidal structure on $\mathcal{D}$.
\item The Day convolution on $\Fun(\mathcal{D},\mathbf{H}_{\mathrm{sm}})$ is cartesian.
\item Consequently, on $\Coalg(Q)$ the tensor product $\Day_Q$ coincides with the cartesian product.  The Seely isomorphism
\[
Q(X\times Y)\;\cong\; QX \Day QY
\]
holds for all $X,Y$, but on coalgebras it reduces to the identity $X\times Y\cong X\times Y$, collapsing the linear logic.
\end{enumerate}
Thus any modality arising from a coreflective subcategory satisfying (i)--(iii) inevitably yields a degenerate linear exponential comonad.
\end{theorem}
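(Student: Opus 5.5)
The plan is to prove the four conclusions in order. Each step reduces to a structural fact about Kan extensions and Day convolution, and the cartesian step (3) is handled exactly as in Lemma~\ref{lem:cartesian}.

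\textbf{Step (1).} Since $i$ is fully faithful, the unit $\eta:\id_{\mathcal{D}}\Rightarrow r\circ i$ is an equivalence. Hence $c\circ c = i(ri)r \simeq i r = c$, so $c$ is an idempotent comonad. By (i)--(iii), $c$ is a symmetric monoidal comonad, with monoidal counit $\epsilon$ and monoidal comultiplication $i\eta r$. Precomposition with a strong symmetric monoidal functor $c$ need not be strong monoidal for Day convolution. However, the left Kan extension $c_!$ is always strong monoidal, and $c^*$ inherits a lax monoidal structure as its right adjoint. I would instead argue as follows. Because $c$ is an idempotent comonad arising from $i\dashv r$, we have $c^* = i_* \circ i^*$ up to the identification $r^*\simeq i_!$. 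Here $i_!$, left Kan extension along $i$, agrees with $r^*$ because $i\dashv r$ gives $\Map(i d, x)\simeq\Map(d, r x)$. So $Q\simeq i_! i^*$. Now $i_!$ is strong monoidal for Day convolution, since $i$ is strong monoidal. Also $i^*=r_!$, because $r$ is left adjoint... The correct direction needs checking. For covariant functors, $i\dashv r$ gives $r^*\simeq i_!$ and $i^*\simeq r_!$. Since $r$ is strong monoidal, $r_!$ is strong monoidal. Hence $Q\simeq i_!r_!$ is strong symmetric monoidal, and idempotence passes from $c$.

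\textbf{Step (2).} For an idempotent comonad, $\Coalg(Q)$ is the full subcategory of $X$ with $QX\simeq X$, which is the essential image of $i_!=r^*$. Since $i_!$ is fully faithful ($i$ is fully faithful), restriction $i^*$ gives an equivalence $\Coalg(Q)\simeq\Fun(\mathcal{D},\mathbf{H}_{\mathrm{sm}})$ with inverse $i_!$. Because $i_!$ is strong monoidal, this is a symmetric monoidal equivalence. Closure of the image under $\Day$ then follows, which justifies calling $\Day_Q$ the restriction of $\Day$.

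\textbf{Step (3).} Transport along (iv) to $\mathcal{D}\simeq\mathcal{E}^{\mathrm{op}}$ with $\mathcal{E}$ cartesian. Day convolution is invariant under monoidal equivalence. The argument of Lemma~\ref{lem:cartesian} then applies verbatim: on representables, $\mathbf{y}_X\Day\mathbf{y}_Y\simeq\mathbf{y}_{X\times Y}\simeq \mathbf{y}_X\times\mathbf{y}_Y$ by the universal property of the product in $\mathcal{E}$. Both functors are cocontinuous in each variable, using cartesian closedness of $\mathbf{H}_{\mathrm{sm}}$ so that pointwise products commute with colimits, and co-Yoneda extends the equivalence. One must also check that the unit $\mathbf{y}_{\mathbf{1}}$ is terminal, which holds because $\mathbf{1}$ is terminal in $\mathcal{E}$.

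\textbf{Step (4).} Combine (2) and (3) to get $X\Day_Q Y\simeq X\times Y$ on coalgebras. Precomposition preserves pointwise products, and $Q$ is a right adjoint (of $c_!$), so $Q(X\times Y)\simeq QX\times QY$. Since $QX, QY$ are coalgebras, this equals $QX\Day QY$.

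\textbf{Main obstacle.} I expect the main obstacle to be Step (1): identifying $Q$ with $i_!r_!$ coherently in the $\infty$-categorical setting and verifying that the comonad structure, not merely the underlying functor, is symmetric monoidal. I would first try to use Lurie's result that left Kan extension along a symmetric monoidal functor is symmetric monoidal for Day convolution \cite[Section~2.2.6]{LurieHA}. I would then obtain the comonad structure from the monoidal adjunction $r_!\dashv r^*$ in the form $i^*\dashv i_!$.
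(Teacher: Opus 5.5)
There is a genuine gap in your Step (1): it rests on a false identification, and the strong monoidality it aims to prove fails in general. Your hesitation there was justified.

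From $i\dashv r$ you correctly get $\mathrm{Lan}_i F\simeq F\circ r$, that is, $i_!\simeq r^*$. The dual identity, however, is $\mathrm{Ran}_r G\simeq G\circ i$, that is, $i^*\simeq r_*$, the \emph{right} Kan extension along $r$. The identity $i^*\simeq r_!$ would need $r\dashv i$, i.e.\ $\mathcal{D}$ reflective with reflector $r$. The adjoint string is therefore $r_!\dashv r^*\simeq i_!\dashv i^*\simeq r_*$. So $i^*$ is only lax monoidal, as the right adjoint of the strong monoidal $i_!$. The adjunction ``$i^*\dashv i_!$'' you planned to use also runs the wrong way: $i_!i^*$ is a comonad precisely because it is $LR$ for $i_!\dashv i^*$.

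Strong monoidality of $Q\simeq i_!i^*$ really does fail. Take any $\mathcal{C}$ whose unit $\mathbf{1}$ is initial, and set $\mathcal{D}=\{\mathbf{1}\}$. Then (i)--(iv) hold automatically, since every coherence condition is an equality of maps out of an initial object. Here $QF$ is the constant functor at $F(\mathbf{1})$. The canonical comparison $Q\mathbf{y}_A\Day Q\mathbf{y}_B\to Q(\mathbf{y}_A\Day\mathbf{y}_B)$ is the map of constant functors induced by $\Map(A,\mathbf{1})\times\Map(B,\mathbf{1})\to\Map(A\otimes B,\mathbf{1})$. For $\mathcal{C}=\mathbf{FinStoch}^{\mathrm{op}}$ this is $(p,q)\mapsto p\otimes q$, from pairs of probability distributions to distributions on $A\times B$. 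It misses every correlated distribution once $|A|,|B|\ge 2$.

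The repair is to assert only a lax structure, which is also what a linear exponential comonad requires. Since $i_!$ is strong symmetric monoidal, doctrinal adjunction makes $i_!\dashv i^*$ a symmetric monoidal adjunction with $i^*$ lax. Hence $Q\simeq i_!i^*$ is an idempotent (lax) symmetric monoidal comonad, in Benton's linear--non-linear pattern. This is also all the paper's own route delivers. It makes $c$ a monoidal comonad on $\mathcal{C}$ and then precomposes, and precomposition along a monoidal functor is only lax for Day convolution.

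With that change, your decomposition $Q\simeq i_!i^*$ is a clean alternative to the paper's argument. Your Steps (2)--(4) never use strong monoidality of $Q$. They need only that $i_!$ is fully faithful, strong monoidal and product-preserving (being $r^*$). Then its essential image is closed under $\Day$, and $i^*$ restricted to that image is a monoidal inverse.

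Your Step (2) is in fact more careful than the paper's. The paper claims $i^*$ is strong monoidal on all of $\Fun(\mathcal{C},\mathbf{H}_{\mathrm{sm}})$. The example above shows this holds only on the image of $i_!$.
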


\begin{proof}
(1) Because $i$ and $r$ are strong monoidal and the adjunction $i \dashv r$ is monoidal (assumption (iii)), the induced comonad $c = i \circ r$ is a symmetric monoidal comonad on $\mathcal{C}$: the comultiplication and counit of $c$ are built from the unit and counit of the adjunction, and their monoidality follows directly from the monoidal adjunction condition.  Precomposition with a symmetric monoidal comonad between small symmetric monoidal $\infty$-categories yields a symmetric monoidal comonad on the Day convolution $\infty$-category
(see \cite[Remark~2.2.6.9]{LurieHA} for the monoidal functor structure,
and the comonad structure lifts pointwise because $c^{*}$ is a strict 2‑functor;
the monoidality of the comonad’s structure maps follows from the monoidal adjunction assumption (iii)).
Hence $Q=c^{*}$ is a symmetric monoidal comonad.  Idempotence follows from the idempotence of the coreflection $i\dashv r$.  The coalgebra category inherits the monoidal structure via $\Day_Q(X,Y)=X\Day Y$ with the coalgebra structure induced by the strength of $Q$ (the dual of the algebra case in \cite{Benton1995}).

(2) The forgetful functor $U:\Coalg(Q)\to\Fun(\mathcal{C},\mathbf{H}_{\mathrm{sm}})$ is strong monoidal by construction: $U(X\Day_Q Y)=X\Day Y = UX\Day UY$, and the coalgebra structure on $X\Day Y$ is induced by the strength of $Q$.  The restriction $i^{*}:\Fun(\mathcal{C},\mathbf{H}_{\mathrm{sm}})\to\Fun(\mathcal{D},\mathbf{H}_{\mathrm{sm}})$ is strong monoidal because $i$ is strong monoidal (restriction along a strong monoidal functor preserves Day convolution).  Hence the composite $i^{*}\circ U : \Coalg(Q) \to \Fun(\mathcal{D},\mathbf{H}_{\mathrm{sm}})$ is strong monoidal.  But $i^{*}\circ U$ is exactly the restriction functor $i^{*}$ on coalgebras, which we already know is an equivalence of categories (see \cite{Woo2026}).  A strong monoidal equivalence is precisely a monoidal equivalence.
Indeed, a strong monoidal functor that is an equivalence of $\infty$-categories
automatically admits a monoidal pseudo‑inverse making the equivalence monoidal;
this is the dual of \cite[Proposition~2.1.3.8]{LurieHA}
(which states that a monoidal functor that is an equivalence can be upgraded to a monoidal equivalence).
Hence $\Coalg(Q)$ with $\Day_Q$ is monoidally equivalent to $\Fun(\mathcal{D},\mathbf{H}_{\mathrm{sm}})$ with Day convolution.
In particular, we obtain the required natural isomorphism
\[
i^{*}(X \Day_Q Y) \;\cong\; i^{*}X \Day_{\mathcal{D}} i^{*}Y,
\]
coherent with the associativity and unit constraints.  This completes the verification of Theorem~\ref{thm:obstruction}(2).

(3) By assumption (iv), $\mathcal{D}^{\mathrm{op}}$ is monoidally equivalent to a cartesian monoidal category $\mathcal{M}$.  Monoidal equivalences preserve Day convolution: $\Fun(\mathcal{D},\mathbf{H}_{\mathrm{sm}})$ with Day convolution is monoidally equivalent to $\Fun(\mathcal{M}^{\mathrm{op}},\mathbf{H}_{\mathrm{sm}})$ with Day convolution induced by the cartesian product.  As proved in Lemma~\ref{lem:cartesian} (the centre model proof generalises verbatim), when the site's tensor product is the cartesian product, Day convolution on the presheaf topos coincides with the pointwise cartesian product.

(4) From (2) and (3), the tensor product $\Day_Q$ on $\Coalg(Q)$ is cartesian.  The comonad $Q$ preserves finite products (any precomposition functor does, because limits in functor categories are pointwise).  Hence for coalgebras $X,Y$,
\[
Q(X\times Y)\cong X\times Y,\qquad
QX \Day QY \cong X \Day_Q Y \cong X\times Y,
\]
and the Seely isomorphism becomes an identity, collapsing the multiplicative structure of linear logic.
\end{proof}

\begin{remark}
Assumptions (i)--(iii) are all crucial: the centre model satisfies them because commutative algebras form a monoidal subcategory, the centre functor is strong monoidal, and the inclusion--centre adjunction is monoidal.  The abelianisation comonad fails (i) and (iii), so it is not covered by this theorem; its failure must be analysed separately.
\end{remark}

\section{Beyond the obstruction}
\label{sec:beyond}

The diagnosis above shows that the centre comonad model fails on two independent fronts: the annihilation of non‑commutative algebras (Lemmas~\ref{lem:annihilation} and \ref{lem:empty-states}) and the collapse of the Seely isomorphism due to the cartesian Day convolution on the classical core (Theorem~\ref{thm:obstruction}).  The first defect is specific to the centre functor; the second defect is structural to any coreflective modality whose classical core is dual to a cartesian category.

To obtain a non‑degenerate cohesive linear \(\infty\)-topos, a quantum modality must satisfy the following desiderata:
\begin{enumerate}[nosep]
\item \emph{Non‑annihilation:} the presheaf \(Q\mathbf{y}_A\) should not be empty for non‑commutative algebras \(A\).
\item \emph{Non‑trivial state space:} there should exist morphisms \(\mathbf{1} \to \mathbf{y}_A\) for non‑commutative \(A\) (ideally, the state space of the qubit should be the Bloch sphere).
\item \emph{Non‑cartesian Day convolution:} the Day convolution on the category of \(Q\)-coalgebras must not be cartesian; otherwise the linear logic collapses.
\item \emph{Non‑trivial Seely isomorphism:} the isomorphism \(Q(X\times Y) \cong QX \Day QY\) must hold, but the two sides must not coincide as a consequence of \(\Day\) being cartesian.
\end{enumerate}

The structural obstruction identified in Theorem~\ref{thm:obstruction} forces us to abandon coreflective modalities on presheaf topoi whose classical core is dual to a cartesian category.  Two escape routes remain open:
\begin{enumerate}[nosep]
\item \emph{Internal modalities (Lawvere--Tierney topologies)} on sheaf topoi, where the modality is sheafification, not precomposition.  The sheafification of a pointwise product is not the pointwise product of sheafifications, so such a modality need not preserve products, and the Day convolution on the classical core may become non‑cartesian.
\item \emph{Topoi that are not presheaf topoi,} such as the CPM construction on Hilbert bimodules \cite{Selinger2004}, where the doubling comonad is not a precomposition functor on a \(1\)-category site.  In this setting, the quantum modality would be given by a genuinely new categorical construction, and the obstruction theorem would not apply.
\end{enumerate}
These directions will be explored in future work.

\section{Conclusion}
\label{sec:conclusion}

We have given a complete mathematical diagnosis of the degeneracy of the centre comonad model of a cohesive linear \(\infty\)-topos.  The quantum modality annihilates all non‑commutative algebras and leaves them stateless.  The Seely isomorphism holds trivially because the Day convolution on the classical core coincides with the cartesian product, collapsing linear logic to classical cartesian logic.

The structural reason for the collapse is isolated in Theorem~\ref{thm:obstruction}: whenever the opposite of the classical core is monoidally equivalent to a cartesian category, the Day convolution becomes cartesian and the linear logic degenerates.  This provides a general obstruction for a whole class of coreflective modalities on presheaf topoi.  A non‑degenerate quantum modality must therefore be constructed without precomposition, either as an internal modality on a sheaf topos or in a setting that is not a presheaf topos.  The mathematical requirements for such a modality have been laid out, pointing the way for future research.

\section*{Acknowledgments}
I am deeply grateful to my mathematics teacher, Mr.~Tim, for his unwavering patience and for introducing me to the beauty of the subject.  I also thank the MathOverflow community for generously sharing their ideas and for their constant encouragement.  Finally, I wish to thank my parents, who stood by my side through the hardest of times and made this work possible.

\end{document}